\overfullrule=0pt
\documentclass[11pt]{amsart}
\usepackage{amscd, amsmath, amsthm, amssymb, xy, xypic}
\newtheorem{theorem}{Theorem}[section]

\newtheorem{lem}[theorem]{Lemma}

\newtheorem{condition}[theorem]{Condition}
\newtheorem{cor}[theorem]{Corollary}
\theoremstyle{defi} 
\newtheorem{defi}[theorem]{Definition}

\theoremstyle{remark}
\newtheorem{remark}[theorem]{Remark}
\numberwithin{equation}{section}

\def \hd #1 {\bfseries #1 \mdseries}
\def \italic #1 {\bfseries \it #1 \rm \mdseries}
\def \ra {\rightarrow}
\def \cen #1 { \begin{center} #1 \end{center}}

\def \mbz {\mathbb Z}

\def \mbc {\mathbb C}
\def \mbp {\mathbb P}

\def \mco {\mathcal {O}}

\def \Q {${\mathbb {Q}}\,$}

\def \Pic {{\rm{Pic}}}
\def \rk {{\rm{rk}}}

\def \Sing {{\rm{Sing}}}

\def \Cl {{\rm{Cl}}}

\def \mult {{\rm{mult}}}
\begin{document}
\title[Lefschetz Hyperplane theorem on \Q-Fano $3$-folds]
{A type of the Lefschetz hyperplane section theorem on \Q-Fano $3$-folds with Picard number one and $\frac{1}{2}(1,1,1)$-singularities}
\author[N.-H. Lee]{Nam-Hoon Lee }
\address{Department of Mathematics Education, Hongik University, 42-1, Sangsu-Dong,
Mapo-Gu, Seoul 121-791, South Korea}
\address{School of Mathematics, Korea Institute for Advanced Study, Dongdaemun-gu, Seoul 130-722, Korea }
\email{nhlee@kias.re.kr}
\dedicatory{Dedicated to  Prof.\ Daniel Burns on the occasion of his 65th birthday}
\subjclass[2000]{14J45, 14J32,  14D06}
\begin{abstract}
We prove a type of the Lefschetz hyperplane section theorem on \Q-Fano $3$-folds with Picard number one and $\frac{1}{2}(1,1,1)$-singularities by using some degeneration method. As a byproduct, we obtain a new example of a Calabi--Yau $3$-fold $X$ with Picard number one whose invariants are
$$(H_X^3,\, c_2 (X) \cdot H_X, \,{{e}} (X) ) = (8, 44, -88),$$
where $H_X$, $e(X)$ and $c_2(X)$ are an ample generator of $\Pic(X)$,
the topological Euler characteristic number and the second Chern class of $X$ respectively.
\end{abstract}
\maketitle
\setcounter{section}{-1}
\section{Introduction}
We work over the complex number field. Let us start by the definition of \Q-Fano variety.
\begin{defi}
A projective variety $Y$ is called a \emph{\Q-Fano variety} if $Y$ has only terminal singularities and $-K_Y$ is ample.
\end{defi}
As  the smooth Fano $3$-folds, \Q-Fano varieties with Picard number one form an important class among general ones.
In this note, we investigate a type of the Lefschetz hyperplane section theorem on certain \Q-Fano $3$-folds.
Let $Y$ be a \Q-Fano $3$-fold with Picard number one and only cyclic singularities of type $\frac{1}{2}(1,1,1)$.
Such \Q-Fano $3$-folds have been extensively investigated in \cite{Sa1, Sa2},   \cite{CaFl}  and  \cite{Ta1, Ta2}.
We assume the following:

\begin{condition} \label{newcon}\
\begin{enumerate}
\item The divisor class group $\Cl(Y)$ of $Y$ is generated a single element $h$.
\item $-2K_Y$ is very ample.
\item The linear system $|-K_Y|$ has a member $D$ that is smooth outside $\Sing(Y)$ and has singularities of type $\frac{1}{2}(1,1)$ at $\Sing(Y)$.
\end{enumerate}
\end{condition}
Note that all the examples of \Q-Fano $3$-folds classified by H. Takagi satisfy this condition (Corollary 3.4 in \cite{Ta2}).

 Let $Y_n$ be a Weil divisor of $Y$ such that $Y_n \sim n h$. If $n$ is odd, then $Y_n$ can not be smooth and at best it has singularities of type $\frac{1}{2}(1,1)$ at $\Sing(Y)$ -- call such one quasi-smooth.
If $n$ is even, $Y_n$ can be smooth.
The purpose of this note is to prove the following type of the Lefschetz hyperplane section theorem on $Y$.

\begin{theorem} \label{mainthm}
Let $Y$ be a \Q-Fano $3$-fold with Picard number one and only $\frac{1}{2}(1,1,1)$-singularities, satisfying Condition \ref{newcon}, and $Y_n$ be the smooth or quasi-smooth divisor  on $Y$ defined as above.
Then
the homomorphism of divisor class groups
$$\Cl(Y) \ra \Cl(Y_n)_f$$
 is injective and primitive (i.e.\ the cokernel is torsion-free).
For an abelian group $A$, we set $A_f := A/ (\rm{torsion\,\,\, part\,\,\, of }$ $A)$.
\end{theorem}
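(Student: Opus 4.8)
The plan is to reduce the primitivity assertion — which is the whole content, injectivity being immediate — to the smooth, base-point-free situation, where a Lefschetz/Grothendieck--Lefschetz argument applies, and to cover the remaining small and odd values of $n$ by a degeneration. First I would dispose of injectivity and set up the cohomological framework. Since $\Cl(Y)=\mbz h$ and $-K_Y\sim mh$ with $m\ge 1$ an integer, the class $h$ is ample; the restriction $h|_{Y_n}$ is a well-defined Weil class (the member $Y_n$ is smooth, resp.\ has only $\frac12(1,1)$-singularities, along $\Sing(Y)$, so Weil divisors restrict), and it is ample on $Y_n$. By Kawamata--Viehweg vanishing $H^1(\mco_Y)=H^2(\mco_Y)=0$, and, using Serre duality and the same vanishing, also $H^1(\mco_Y(-nh))=H^2(\mco_Y(-nh))=0$ for every $n\ge 1$; a diagram chase with the ideal sheaf sequence of $Y_n\subset Y$ then gives $H^1(\mco_{Y_n})=0$. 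Hence $\Cl(Y_n)_f$ is a genuine lattice, the image of $h$ in it is a nonzero ample vector, and the homomorphism is injective. What remains is to show that $\mbz\cdot h|_{Y_n}$ is \emph{saturated} in $\Cl(Y_n)_f$.

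For the values of $n$ for which $|nh|$ is base point free — these include all sufficiently large even $n$, since $-2K_Y\sim 2mh$ is very ample and $h$ is ample — a general $Y_n$ is smooth and disjoint from $\Sing(Y)$, and I would deduce the statement either by invoking the Grothendieck--Lefschetz theorem for the normal projective threefold $Y$ and its ample base-point-free section $Y_n$, or by working on the resolution $\pi\colon\widetilde Y\to Y$ that replaces each $\frac12(1,1,1)$-point by a $\mbp^2$: there $\Pic(\widetilde Y)$ is generated by a lift of $h$ together with the exceptional classes $E_i$, the surface $Y_n$ sits inside $\widetilde Y$ as $\pi^{*}$ of an ample Cartier divisor (nef, big and semiample) and is disjoint from the $E_i$, so the Lefschetz-type statement for this semiample big divisor, together with quotienting out the $E_i$ on $\widetilde Y$, yields injectivity with torsion-free cokernel. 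The genuinely delicate cases are the small $n$, where $|nh|$ fails to be base point free, and the odd $n$, where $Y_n$ really carries $\frac12(1,1)$-singularities; for the latter I would pass to the minimal resolution $\widetilde{Y_n}$, which introduces one $(-2)$-curve over each singular point of $Y_n$, so that $\Cl(Y_n)$ and $\Pic(\widetilde{Y_n})$ differ only by those curve classes.

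To handle these cases I would use a flat degeneration $Y_n\rightsquigarrow Y_n^0$ inside $|nh|$ to a reduced reducible surface — most naturally $Y_n^0=D\cup Y_{n-m}$, with $D\in|{-K_Y}|$ the member of Condition \ref{newcon}(3) when $n>m$, and an explicit maximally degenerate configuration when $n\le m$. Lower semicontinuity of the Picard number produces an injective specialization homomorphism $\Cl(Y_n)_f\hookrightarrow\Cl(Y_n^0)_f$ compatible with restriction from $\Cl(Y)$; since a sublattice that is saturated in an overlattice is saturated in every lattice in between, it suffices to prove primitivity on $Y_n^0$. There the normalization/Mayer--Vietoris sequence expresses $\Cl(Y_n^0)$ in terms of $\Cl(D)$, $\Cl(Y_{n-m})$ and the restriction maps to the double curve $C=D\cap Y_{n-m}$, and one closes an induction on $n$: the base cases are $Y_n$ a del Pezzo surface, whose Picard lattice is explicitly known, and the K3 case $n=m$, where one uses the genericity of the Hodge structure on the vanishing cohomology of $Y_n$ (a Noether--Lefschetz statement) to see that no algebraic class lies strictly ``below'' $h$.

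I expect the main obstacle to be exactly this degeneration step: producing the \emph{compatible injective} specialization map for a family of normal — but not necessarily $\mbq$-factorial — surfaces, and then controlling torsion in the divisor class groups tightly enough to exclude even a single nonzero torsion element in the cokernel — both the torsion created by the $\frac12(1,1)$-points on $Y_n$ and the torsion entering the Mayer--Vietoris sequence along $C$. Here the numerical constraints from Takagi's classification and from adjunction, $K_{Y_n}\sim(n-m)\,h|_{Y_n}$, are what force the del Pezzo base cases to come out with the class of $h$ primitive. Establishing the Noether--Lefschetz input in the range $n\ge m$ is the other sensitive point, and is itself most cleanly obtained by degenerating $Y_n$ to a member whose vanishing cohomology is understood.
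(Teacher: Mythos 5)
Your outline is a genuinely different strategy from the paper's, but its load-bearing step --- the one you yourself flag as the main obstacle --- is a real gap, not just a technicality. The claimed ``injective specialization homomorphism $\Cl(Y_n)_f\hookrightarrow\Cl(Y_n^0)_f$'' for a degeneration of $Y_n$ to the reducible surface $D\cup Y_{n-m}$ does not exist in this naive form: for a one-parameter family with reducible special fibre the comparison between the general fibre and the special fibre goes through the Clemens--Schmid sequence and the limit mixed Hodge structure, the natural restriction maps go from the central fibre (or total space) to the general fibre rather than the other way, and the divisor class group of the non-normal surface $D\cup Y_{n-m}$ is not the correct receptacle for the limit of $\Cl(Y_n)$. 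Moreover the total space of such a pencil is singular along the base locus $C$ and at $\Sing(Y)$ and must be modified before any integral comparison is possible; controlling what that modification does to integral classes is exactly where the difficulty lives. Without this map, the reduction ``saturated in the overlattice $\Rightarrow$ saturated in between'' has nothing to apply to, and the odd-$n$ case (the heart of the theorem, since quasi-smooth $Y_n$ necessarily passes through $\Sing(Y)$) is left unproved. Two further soft spots: the Noether--Lefschetz input would only give primitivity for a \emph{very general} member, whereas the theorem is asserted for every smooth or quasi-smooth $Y_n$ (this is repairable, since $\mathrm{NS}$ is saturated in $H^2$ and the divisibility of the fixed topological class $h|_{Y_n}$ is constant in an equisingular family, but you do not make that argument); and for the base-point-free even case, Grothendieck--Lefschetz for a surface in a threefold does not give the integral torsion-free-cokernel statement --- the effective Lefschetz condition fails in this codimension, and the integral topological Lefschetz theorem on the singular ($V$-manifold) $Y$ is precisely what is not standard.

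The paper sidesteps all of this by never degenerating $Y_n$ at all. It takes the smooth Calabi--Yau double cover $\pi: X\to Y$ branched along $S\cup\Sing(Y)$ with $S\in|-2K_Y|$, computes $\Pic(X)_f=\langle H_X\rangle$ with $H_X^3=2h^3$ via a semistable degeneration of $X$ itself (using the lattice-theoretic Theorem \ref{degthm}), deduces from $(\pi^*h)^3=2h^3=H_X^3$ that $\pi^*h=H_X$, i.e.\ that $\pi^*:\Cl(Y)\to\Pic(X)_f$ is already an isomorphism, and then applies the \emph{classical} Lefschetz hyperplane theorem to the smooth ample pair $(X, X_n:=\pi^{-1}(Y_n))$ before descending through the commutative square relating $i_Y^*$ and $i_X^*\circ\pi^*$. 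If you want to rescue your approach, the missing ingredient is precisely some such device for converting the singular pair $(Y,Y_n)$ into a smooth one where integral Lefschetz theory applies; as written, your proposal does not supply it.
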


As a byproduct of the proof, we will be able to calculate some invariants of a Calabi--Yau $3$-fold, which was constructed in \cite{Le1} but whose invariants could not be obtained there.
It turns out that it is a new example of Calabi--Yau $3$-fold with Picard number one (\S \ref{sec4}).
Its invariants are
$$(H_X^3, c_2 (X) \cdot H_X, {{e}} (X) ) = (8, 44, -88),$$
where $H_X$, $e(X)$ and $c_2(X)$ are an ample generator of $\Pic(X)$,
the topological Euler characteristic number and the second Chern class of $X$ respectively.

The idea of the proof is to use a \emph{smooth} double covering $X$ over $Y$.
Firstly we determine the Picard group of $X$ by some degeneration method. Then we can calculate invariants of the preimage of $Y_n$ by the ordinary Lefschetz hyperplane section theorem on $X$. With this we finally obtain the invariants of $Y_n$.

In \S \ref{sec2}, we briefly introduce some degeneration method from \cite{Le2}. \S \ref{sec3} is devoted to calculation of the Picard group of the blow-up of $Y$ at its singularities.
In \S \ref{sec4}, we apply the degeneration method  to calculate invariants of $X$  and prove the main theorem.

\section{Degeneration and integral cohomology groups}\label{sec2}\

We summarize some results of \cite{Le2} that will be used in this note.

Let $\phi: W \ra \bar \Delta$ be a proper map from a smooth $(n+1)$-fold $W$ with
boundary onto a closed disk $\bar \Delta = \{ t \in \mbc \mid \|t\|
\leq 1\}$ such that the fibers $W_t = \phi^{-1} (t)$ are connected
K\"ahler $n$-folds for every $t \neq 0$ (generic) and the central
fiber $W_0=\phi^{-1} (0) =\bigcup_\alpha V_\alpha$ is a normal
crossing of $n$-folds. We denote the general fiber by $W_t$. The
condition, $t \neq 0$, is assumed in this notation. We call such a
map $\phi$ (or simply the total space $W$) a semi-stable degeneration
of $W_t$ and we say that $W_0$ is smoothable to $W_t$ with the
smooth total space.
Let $V_{ij} = V_i \cap V_j$, $V_{ijk} = V_i \cap V_j \cap V_k$ and etc. Consider
the exact sequence
$$0 \ra \mbz_{W_0} \ra \bigoplus_\alpha \mbz_{V_\alpha} \ra \bigoplus_{i < j}
\mbz_{V_{ij}} \stackrel{\tau}\ra \bigoplus_{i < j < k} \mbz_{V_{ijk}} {\ra}
\cdots $$
We introduce a short exact sequence,
$$0 \ra \mbz_{W_0} \ra \bigoplus_\alpha \mbz_{V_\alpha} \ra {{\rm{ker}}} (\tau)
{\ra} 0 $$
to give an exact sequence
\begin{align}
\cdots \ra H^m (W_0, \mbz) \stackrel{\psi_m}{\ra}
\bigoplus_\alpha H^m (V_\alpha, \mbz) \ra
H^{m} ({{\rm{ker}}} (\tau)) \ra \cdots \label{seq}
\end{align}
Note that $\psi_m = \bigoplus_\alpha
j_\alpha^*|_{H^m (W_0, \mbz)}$, where $j_\alpha: V_\alpha
\hookrightarrow W_0$ is the inclusion.
Let
$$G^m (W_0, \mbz) = {\rm{im}} (\psi_m) \subset \bigoplus_\alpha H^m (V_\alpha, \mbz).$$

Consider non-negative integers, $q_1, \cdots, q_k$ such that
$$q_1 + \cdots +q_k = n.$$
Then there are multilinear maps, defined by the cup-product:
$$H^{2q_1} (W_t, \mbz) \times \cdots \times H^{2q_k} (W_t, \mbz) \ra \mbz,$$
$$H^{2q_1} (V_\alpha, \mbz) \times \cdots \times H^{2q_k} (V_\alpha, \mbz) \ra \mbz.$$
The latter one induces a multilinear map
$$\bigoplus_{\alpha_1} H^{2q_1} (V_{\alpha_1}, \mbz) \times \cdots \times \bigoplus_{\alpha_k} H^{2q_k} (V_{\alpha_k}, \mbz) \ra \mbz,$$
setting the mixed terms to be equal to zero. By restricting, we can define a multilinear map
$$G^{2q_1} (W_0, \mbz) \times \cdots \times G^{2q_k} (W_0, \mbz) \ra \mbz.$$
Note that this multilinear map define a cup product on $G^{2} (W_0, \mbz)$.

Let $h^{p,q}(W_t)=\dim H^{p,q}(W_t)$ and $h^{i}(W_t)=\dim H^{i}(W_t)$.
We will need the following simplified results (\cite{Le2}, Theorem 6.5):
\begin{theorem} \label{degthm} Suppose that $h^{2,0}(W_t)=0$.
Let $a_1, a_2, \cdots, a_k \in G^2(W_0, \mbz)$ with $k=h^2(W_t)$ and $b_1, b_2, \cdots, b_k \in G^{n-2}(W_0, \mbz)$ such that the intersection matrix of cup products:
$$(a_i \cdot b_j)$$
is unimodular. Then the sublattice $\langle a_1, \cdots, a_m \rangle$ of $G^{2} (W_0, \mbz)$ is isomorphic to $H^2(W_t, \mbz)_f$ with the cup product preserved.
\end{theorem}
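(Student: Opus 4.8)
The plan is to reduce the theorem to a single compatibility between the combinatorial cup product on $G^\ast(W_0,\mbz)$ and the genuine intersection pairing on the smooth fibre $W_t$, and then to extract the isometry by linear algebra from the unimodularity hypothesis. Write $\sigma^m\colon H^m(W_0,\mbz)\ra H^m(W_t,\mbz)$ for the specialization map, obtained by identifying $H^m(W_0,\mbz)\cong H^m(W,\mbz)$ via the deformation retraction of $W$ onto $W_0$ and then restricting to $W_t$; under this identification $j_\alpha^\ast$ is the restriction from $W$ to $V_\alpha$, so both $\psi_m=\bigoplus_\alpha j_\alpha^\ast$ and $\sigma^m$ are read off from $H^m(W)$. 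The key lemma I would prove first is: for $\tilde a\in H^2(W_0,\mbz)$ and $\tilde b\in H^{2n-2}(W_0,\mbz)$ (the complementary degree, so that $a_i\cdot b_j\in\mbz$; here $b_j\in G^{2n-2}(W_0,\mbz)$), with $a=\psi_2(\tilde a)$ and $b=\psi_{2n-2}(\tilde b)$, one has
$$a\cdot b \;=\; \int_{W_t}\sigma^2(\tilde a)\cup\sigma^{2n-2}(\tilde b),$$
together with the analogous identity for every multilinear cup product $G^{2q_1}\times\cdots\times G^{2q_k}\ra\mbz$ with $q_1+\cdots+q_k=n$.

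This lemma is where the smoothness of the total space does the real work. Since $W$ is smooth and the fibres are homologous, $[W_t]=[W_0]=\sum_\alpha[V_\alpha]$ in $H^2(W,\mbz)$. For $\omega\in H^{2n}(W,\mbz)$ the projection formula gives $\int_{W_t}\omega|_{W_t}=\int_W\omega\cup[W_t]$ and $\int_{V_\alpha}\omega|_{V_\alpha}=\int_W\omega\cup[V_\alpha]$. Applying this to $\omega=\tilde a\cup\tilde b$ and summing over $\alpha$ yields
$$\int_{W_t}\sigma^2(\tilde a)\cup\sigma^{2n-2}(\tilde b)=\sum_\alpha\int_{V_\alpha}j_\alpha^\ast\tilde a\cup j_\alpha^\ast\tilde b,$$
and the right-hand side is exactly $a\cdot b$: the mixed terms $j_\alpha^\ast\tilde a\cup j_\beta^\ast\tilde b$ with $\alpha\neq\beta$ never appear, matching the convention that they are set to zero. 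Inserting a single factor $[V_\alpha]$ into a product of several restrictions gives the multilinear version verbatim.

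With the lemma in hand the theorem becomes linear algebra. Choose lifts $\tilde a_i,\tilde b_j\in H^\ast(W_0,\mbz)$ of $a_i,b_j$ and set $\alpha_i=\sigma^2(\tilde a_i)$ and $\beta_j=\sigma^{2n-2}(\tilde b_j)$ in $H^\ast(W_t,\mbz)$; by the lemma $\alpha_i\cdot\beta_j=a_i\cdot b_j$, so the matrix $(\alpha_i\cdot\beta_j)$ is again unimodular. Nonsingularity forces the $a_i$ to be linearly independent, hence a basis of $L:=\langle a_1,\dots,a_k\rangle$, and likewise forces $\beta_1,\dots,\beta_k$ to be linearly independent in $H^{2n-2}(W_t,\mbq)$; since $h^{2n-2}(W_t)=h^2(W_t)=k$ by Poincar\'e duality, the $\beta_j$ form a $\mbq$-basis of $H^{2n-2}(W_t,\mbq)$. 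Consequently $\Psi\colon H^2(W_t,\mbz)_f\ra\mbz^k$, $y\mapsto(y\cdot\beta_j)_j$, is injective (Poincar\'e duality being perfect on the torsion-free quotients), while its restriction to $M:=\langle\alpha_1,\dots,\alpha_k\rangle$ is onto $\mbz^k$ because $(\alpha_i\cdot\beta_j)$ is unimodular. Sandwiching $\mbz^k=\Psi(M)\subseteq\Psi(H^2(W_t,\mbz)_f)\subseteq\mbz^k$ and using injectivity gives $M=H^2(W_t,\mbz)_f$. Thus $a_i\mapsto\alpha_i$ extends to a group isomorphism $L\xrightarrow{\sim}H^2(W_t,\mbz)_f$, and the multilinear form of the lemma shows $a_{i_1}\cdots a_{i_k}=\alpha_{i_1}\cdots\alpha_{i_k}$, so the isomorphism preserves the cup product, as required.

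I would flag two points about difficulty. First, the only genuinely substantive ingredient is the compatibility lemma; everything after it is formal bookkeeping, and it is worth noting that, once the unimodular pairing data are given, the \emph{integral} identification $M=H^2(W_t,\mbz)_f$ follows from Poincar\'e duality alone. Second, the hypothesis $h^{2,0}(W_t)=0$ is what secures the Hodge-theoretic background in which \cite{Le2} constructs such data and identifies $G^2(W_0,\mbz)$ canonically with $H^2(W_t,\mbz)_f$: through the limiting mixed Hodge structure and the Clemens--Schmid sequence it forces the monodromy to act trivially on $H^2(W_t)$ (the limit is pure of weight $\le 2$ with $N=0$, since $\dim F^2H^2_{\lim}=h^{2,0}(W_t)=0$), so that $\sigma^2$ descends to $G^2$ and the specialization is surjective onto $H^2(W_t,\mbz)_f$ modulo torsion. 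I expect this monodromy statement -- the substance of Theorem~6.5 of \cite{Le2} -- to be the main obstacle if a self-contained account is demanded; in the simplified setting above one may instead invoke it from \cite{Le2}, or verify the needed descent directly by pairing $\ker\psi_2$ against the $\beta_j$ through the lemma.
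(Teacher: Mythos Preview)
The paper does not prove this theorem at all: it is stated as a simplified version of Theorem~6.5 of \cite{Le2} and is invoked without argument. So there is no proof in the paper to compare against, and your sketch stands on its own.

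That said, your argument is sound and is in fact the mechanism underlying the cited result. The compatibility lemma is exactly right: identifying $H^*(W_0,\mbz)\cong H^*(W,\mbz)$ via the retraction, the homology relation $[W_t]=\sum_\alpha[V_\alpha]$ in $H_{2n}(W,\mbz)$ together with the cap-product identity $\int_Z i_Z^*\omega=\langle\omega,[Z]\rangle$ gives $\int_{W_t}\sigma(\tilde a)\cup\sigma(\tilde b)=\sum_\alpha\int_{V_\alpha}j_\alpha^*\tilde a\cup j_\alpha^*\tilde b$, which is the definition of $a\cdot b$ on $G^*(W_0,\mbz)$. (A small quibble: write these fibre classes in $H_{2n}(W,\mbz)$ rather than ``$H^2(W,\mbz)$'', since $W$ has boundary and Poincar\'e duality is not available in the naive form.) The subsequent linear algebra --- unimodularity forces the $\alpha_i$ to be a $\mbz$-basis of $H^2(W_t,\mbz)_f$ via the perfect Poincar\'e pairing, whence $a_i\mapsto\alpha_i$ is the desired isometry --- is correct as written, and does not depend on the choice of lifts $\tilde a_i$ beyond the fact that each choice yields a (possibly different) cup-product-preserving isomorphism.

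You are also right that the hypothesis $h^{2,0}(W_t)=0$ plays no role in this bare extraction argument; it enters in the full Theorem~6.5 of \cite{Le2}, where one wants the monodromy on $H^2$ to be trivial so that $\sigma^2$ descends to identify $G^2(W_0,\mbz)$ canonically with $H^2(W_t,\mbz)_f$. In the present paper the hypothesis is carried along from the source but, for the statement as quoted, your pairing-and-unimodularity argument already suffices.
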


\section{Picard group of some blow-up}\label{sec3}\
From now on, the variety $Y$ is a \Q-Fano $3$-fold as in Theorem \ref{mainthm}.  Under (1) in Condition \ref{newcon}, the class group $\Cl(Y)$ is generated by an element $h$ with  $h^3 > 0$.
Let $f : V_1 \ra Y$ be the blow up at $\Sing(Y)=\{p_1, \cdots, p_N\}$ and $e_i$'s be the exceptional divisors over $p_i$'s.
Let $B$ be a prime divisor on $Y$. Then we have
$$f^*(B) = \widetilde B+\sum_i \frac{1}{2} q_i e_i,$$
for some nonnegative integer $q_i$, where $\widetilde B$ is the proper transform of $B$.
We define the number $q_i$ as the multiplicity $\mult_p(B)$ of $B$ at $p_i \in \Sing(Y)$.
It is easy to see that $B$ is Cartier at $p_i$ if and only if $\mult_{p_i}(B)$ is even.
The purpose of this section is to show:
\begin{lem}\label{key} The Picard group of $V_1$ is
$$\Pic(V_1) = \langle f^*(h)- \frac{1}{2}\sum_i e_i, e_1, \cdots, e_N \rangle.$$
\end{lem}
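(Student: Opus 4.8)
The plan is to identify $\Pic(V_1)$ inside $\Cl(V_1)$, using that blowing up the isolated cyclic quotient singularities $p_i$ of type $\frac12(1,1,1)$ produces a \emph{smooth} threefold $V_1$, so $\Pic(V_1)=\Cl(V_1)$, and this group is well understood. First I would recall that each exceptional divisor $e_i\cong\mbp^2$, and that since the singularity is $\frac12(1,1,1)$ the normal bundle of $e_i$ is $\mco_{\mbp^2}(-2)$; in particular $e_i$ is a Cartier divisor on the smooth variety $V_1$. Next, pulling back Weil divisors gives a map $f^*\colon \Cl(Y)\to\Cl(V_1)$ (in the sense of proper transform plus fractional exceptional correction as in the displayed formula $f^*(B)=\widetilde B+\sum_i\frac12 q_i e_i$), and since $f$ is an isomorphism away from the $e_i$, every class in $\Cl(V_1)$ is of the form $f^*(D)+\sum_i a_i e_i$ with $D\in\Cl(Y)$ (using $\Cl(Y)=\mbz h$) and $a_i\in\mbq$ chosen so that the total class is integral. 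Concretely this means $\Cl(V_1)=\langle f^*(h), e_1,\dots,e_N\rangle$ as a $\mbq$-vector space, and the task is to pin down the precise integral lattice.

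The key computation is local at each $p_i$: in the local class group $\Cl(Y,p_i)\cong\mbz/2$, the generator $h$ restricts to the nontrivial element precisely because $h^3$ is odd-denominator at each singular point — equivalently, $\mult_{p_i}(h)$ is odd for every $i$ (otherwise $h$ would be Cartier everywhere and $Y$ would be Gorenstein with Picard number one, contradicting the classification / Condition \ref{newcon}, since then $-K_Y$ would already be very ample rather than only $-2K_Y$). Granting $\mult_{p_i}(h)=1$ for all $i$, the formula reads $f^*(h)=\widetilde h+\sum_i\frac12 e_i$, so the integral class $\widetilde h=f^*(h)-\frac12\sum_i e_i$ lies in $\Pic(V_1)$. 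Together with the Cartier divisors $e_1,\dots,e_N$ these generate the claimed sublattice, and I would then check the reverse inclusion: any integral combination $a_0 f^*(h)+\sum_i a_i e_i$ with $a_0,a_i\in\mbq$ that is Cartier must have $a_0\in\mbz$ (intersect with a curve coming from $Y$ missing the $p_i$) and then $a_0\cdot\frac12+a_i\in\mbz$ at each $i$ (read off the coefficient of $e_i$, i.e. Cartier-ness of the class along $e_i$), which forces the class into $\langle f^*(h)-\frac12\sum_i e_i,\ e_1,\dots,e_N\rangle$.

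The main obstacle is the claim $\mult_{p_i}(h)=1$ for every $i$, i.e. that the single generator $h$ of $\Cl(Y)$ is non-Cartier at each singular point with the minimal possible multiplicity. I expect to argue this from Condition \ref{newcon}(1)–(2): since $\Cl(Y)=\mbz h$ and $-2K_Y$ is very ample, $-K_Y\sim m h$ for some $m$ with $-2K_Y\sim 2m h$ Cartier; if some $\mult_{p_i}(h)$ were even then $h$ would be Cartier at $p_i$, and one can then show (using that there is essentially one generator and the $\frac12(1,1,1)$ points are all ``alike'' under the group structure) that $h$ would be Cartier everywhere, making $Y$ Gorenstein — but a Gorenstein terminal $\mbq$-Fano threefold is smooth, contradicting $\Sing(Y)\neq\varnothing$. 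Hence $\mult_{p_i}(h)$ is odd, and minimality ($=1$) follows because $\widetilde h=f^*(h)-\frac12\sum_i q_i e_i$ with $q_i\geq 3$ would force $\widetilde h$ to be badly non-nef / violate the structure of the $\frac12(1,1,1)$ blow-up; more directly, $q_i\in\{1\}$ since $\Cl(Y,p_i)=\mbz/2$ and one checks the pullback of the generator realizes the $\frac12 e_i$ correction with coefficient exactly $1$. I would isolate this as a separate short lemma or invoke Condition \ref{newcon}(3): the chosen $D\in|-K_Y|$ with $\frac12(1,1)$ singularities at the $p_i$ pulls back to $f^*(D)=\widetilde D+\sum_i\frac12 e_i$, exhibiting $\mult_{p_i}(-K_Y)$ odd, and since $-K_Y\sim m h$ this forces $\mult_{p_i}(h)$ odd, hence (by $\Cl(Y,p_i)=\mbz/2$ and minimality of the proper-transform correction) equal to $1$.
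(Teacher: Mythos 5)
Your overall architecture matches the paper's: since $V_1$ is smooth, $\Pic(V_1)=\Cl(V_1)$; the integrality of $f^*(h)-\frac{1}{2}\sum_i e_i$ and the reduction of an arbitrary class to the lattice $\langle f^*(h)-\frac{1}{2}\sum_i e_i, e_1,\dots,e_N\rangle$ both hinge on the single fact that $h$ is non-Cartier (equivalently, $\mult_{p_i}(h)$ is odd) at \emph{every} $p_i$, followed by a parity bookkeeping argument. One remark before the main point: you only need oddness of $\mult_{p_i}(h)$, not $\mult_{p_i}(h)=1$; if $q_i$ is odd then $f^*(h)-\frac{1}{2}\sum_i e_i$ differs from the proper transform of $h$ by an integral combination of the $e_i$, so your paragraph about ``minimality'' of the correction is unnecessary.

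The genuine problem is your primary justification of the key fact. The claim that ``a Gorenstein terminal \Q-Fano threefold is smooth'' is false: Gorenstein terminal threefold singularities are exactly the isolated cDV (hypersurface) points, so Gorensteinness alone does not contradict $\Sing(Y)\neq\varnothing$. Likewise, ``$h$ Cartier at one $p_i$ implies Cartier at all $p_i$ because the points are alike under the group structure'' is not an argument; a priori the local behaviour of the generator $h$ could differ from point to point, and the lemma needs non-Cartierness at \emph{each} $p_i$ separately. What closes the gap --- and is essentially the paper's route --- is that a $\frac{1}{2}(1,1,1)$ point is intrinsically non-Gorenstein: $K_Y$ corresponds to the weight $1+1+1\equiv 1 \pmod 2$ in the local class group $\mathbb{Z}/2$, so $K_Y$ is non-Cartier at every $p_i$; writing $K_Y\sim -rh$ forces $r$ odd (otherwise $rh=2(r/2)h$ would be Cartier), and then $h\sim (r+1)h+K_Y$ is non-Cartier at every $p_i$. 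Your fallback via Condition \ref{newcon}(3) --- the anticanonical member $D$ with $\frac{1}{2}(1,1)$ points is non-Cartier at each $p_i$, hence so is $h$ since $-K_Y\sim mh$ with $m$ necessarily odd --- can also be made to work, but it still requires the local verification that a divisor which is Cartier at a $\frac{1}{2}(1,1,1)$ point cannot have merely a $\frac{1}{2}(1,1)$ singularity there (its preimage in the local double cover would have to be singular); you assert rather than prove this. Replace the Gorenstein-implies-smooth step with either of these and the proof closes.
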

\begin{proof}
Since $\dim Y$ is odd, $K_Y $ is \emph{not} Cartier at $p_i \in \Sing(Y)$.
Let $K_Y \sim -r h$ for some integer $r$. Note that $r$ is odd since otherwise $K_Y$ would be Cartier.
Note that $h \sim (r+1)h + K_Y$ and $(r+1)h$ is Cartier. So $h$ is not Cartier at any point $p_i$.

Note that $f^*(K_Y)+ \frac{1}{2}\sum_i e_i=K_{V_1}$. So the following rational combination of divisors
\begin{align*}
f^*(h)- \frac{1}{2}\sum_i e_i &\sim f^*((r+1)h)+ f^*(K_Y)+ \frac{1}{2}\sum_i e_i - \sum_i e_i \\
&\sim f^*((r+1)h)+K_{V_1}- \sum_i e_i
\end{align*}
is actually a divisor class since $(r+1)$ is even. Consider a subgroup
$$G:= \langle f^*(h)- \frac{1}{2}\sum_i e_i, \,\, e_1, \cdots, e_N \rangle$$
of $\Pic(Y)$.
Let $L$ be a prime divisor on $V_1$ such that $e_i \not \subset L$ for any $i$. Let $q_i$ be the degree $L|_{e_i}$ in $e_i$.
Then we have
$$L = f^*(\check{L}) -\sum_i \frac{1}{2} q_i e_i,$$
where $\check L = f(L)$. Note that $q_i = \mult_{p_i}(\check L)$.
We can set $\check L \sim k h$ for some integer $k$.
Note that the following are equivalent:
\begin{enumerate}
\item $q_i$ is even,
\item $\check L$ is Cartier at $p_i$,
\item $kh$ is Cartier at $p_i$,
\item $k$ is even.
\end{enumerate}
So we have
\cen{$k \equiv q_i (\rm{mod} 2)$}
for any $i$. Then
\begin{align*}
L &= f^*(\check{L}) -\sum_i \frac{1}{2} q_i e_i \\
&\sim k f^*(h) - \sum_i \frac{1}{2} q_i e_i\\
&= k \left ( f^*(h)- \frac{1}{2}\sum_i e_i \right ) + \sum_i \frac{1}{2} (q_i -k) e_i.
\end{align*}
Since $(q_i -k)$'s are even, $\frac{q_i-k}{2}$'s are integers and we have $L \in G$.
A divisor is linear equivalent to a linear combination of prime divisors, so any divisor class belongs to $G$.
Therefore we have $\Pic(V_1) \subset G$.
\end{proof}

\begin{remark}
If $\Cl(Y)$ has non-zero torsion part or its rank is higher than one, the computation of $\Pic(V_1)$ is non-trivial and depends on the detailed geometry of $Y$.
\end{remark}
Since {$h^i (Y, \mco_Y) = 0$ for $ 0< i < 3$,}
we have
$$H^2(V_1, \mbz) \simeq \Pic(V_1) = \langle f^*(h)- \frac{1}{2}\sum_i e_i, e_1, \cdots, e_N \rangle.$$
By Poincar\'e duality, there are classes $m_0, m_1, \cdots, m_N$ in $H^{4}(V_1, \mbz)$ whose cup product matrix with $f^*(h)- \frac{1}{2}\sum_i e_i, e_1, \cdots, e_N$ is the $(N+1)\times (N+1)$ identity matrix.

\section{The proof} \label{sec4}
Under (2) in Condition \ref{newcon},  one can find a smooth surface $S$ in $|-2K_Y|$ such that $S \cap Y_n$ and $C:=S\cap D$ are are all smooth curves.
Then
there is a Calabi--Yau $3$-fold $X$ of Picard number one (Theorem 2.1 of \cite{Le1}) that is a double covering over $Y$ with
the
branch locus
\cen{$ S \cup {\rm{Sing}} (Y)$}
(\cite{Le1}, Theorem 1.1).

Now we use a semistable degeneration $W \ra \Delta$ of $X$, which we constructed in the proof of Theorem 2.1 of \cite{Le1} (p.539 - 540).
The generic fiber $W_t$ is a deformation of $X$.

To describe the
central fiber $W_0$, we introduce some notation.
\begin{enumerate}
\item[-] Let $V_1$ be the blow-up of $Y$ at $\Sing (Y)$ as before. Take
surfaces $S$, $D$ as above. Note that the proper transform $\widetilde D$
is a smooth $K3$ surface.
\item[-] Let $g : V_2 \ra Y$ be the blow-up at $\Sing (Y)$ and $C := S\cap D$,
and $f_1, \dots, f_N$ the $g$-exceptional divisors over $\Sing (Y)$. Let
$\widetilde {D'}$
be the proper transform of $D$.
\item[-] Let $E_1, \dots, E_N$ be copies of $\mbp^3$'s.

\end{enumerate}

Then $W_0 = V_1 \cup V_2 \cup E_1 \cup \cdots \cup E_N,$ where $V_1 \cap V_2 = \widetilde D = \widetilde {D'}$,
$V_1 \cap E_i = e_i$, $V_2 \cap E_i = f_i$, $e_i \neq f_i$ $(1 \leq i \leq N)$, and $E_i \cap E_j = \emptyset$
$(1 \leq i < j \leq N)$. $e_i$ and $f_i$ are planes of $E_i$.
Note that $e_i$ and $f_i$ were denoted by $H_{1i}$ and $H_{2i}$ respectively in \cite{Le1}.

For simplicity, let ${\rm{Sing}} (Y) = \{ p \}$ be composed of a single point ($N=1$). The proof for general cases is similar.
Let $q=2$ or $4$ and abbreviate $H^*(*, \mbz), G^*(*, \mbz)$ as in \S \ref{sec2} by $H^*(*), G^*(*)$ respectively.
Since
$$H^{q-1}(\widetilde D) = H^{q-1} (e_1 \cap f_1)=0,$$
the Mayer--Vietories sequences are:
$$0 \ra H^q(V_1 \cup V_2) \ra H^q(V_1) \oplus H^q( V_2) \ra H^q(\widetilde D)$$
and
$$0 \ra H^q(e_1 \cup f_1) \ra H^q(e_1) \oplus H^q( f_1) \ra H^q(e_1 \cap f_1).$$

So we can recognize
\cen{ $H^q(V_1 \cup V_2)$ and $H^q(e_1 \cup f_1) $}
as sublattices of
\cen{$H^q(V_1) \oplus H^q( V_2)$ and $H^q(e_1) \oplus H^q( f_1)$}
respectively. Let us denote them by
\cen{$\widehat{H^q(V_1) \oplus H^q( V_2)}$ and $\widehat{H^q(e_1) \oplus H^q( f_1)}$}
respectively.

Consider the following commutative diagram:

$$
\xymatrix{
H^q (W_0) \ar[r]^{\psi_2} \ar@{-->}[d]& H^q(V_1) \oplus H^q(V_2) \oplus H^q(E_1) \\
H^q(V_1 \cup V_2) \oplus H^q(E_1) \ar[ur] \ar@{=}[r] \ar@{-->}[d]& \left(\widehat{H^q(V_1) \oplus H^q ( V_2)}\right ) \oplus H^q(E_1) \ar@{^{(}->}[u]_{i} \ar[d]^\nu\\
H^q(e_1 \cup f_1) \ar@{=}[r]& \widehat{H^q(e_1) \oplus H^q( f_1)}
}$$
where the sequence of dotted arrows is exact, coming from the Mayer--Vietories sequences of the pair $V_1 \cup V_2$, $E_1$, $i$ is the inclusion map, and the map
$$\nu: \left(\widehat{H^q(V_1) \oplus H^q ( V_2)}\right ) \oplus H^q(E_1) \ra \widehat{H^q(e_1) \oplus H^q( f_1)}$$
acts as follows:
$$(l_1, l_2, l_3) \mapsto (l_1|_{e_1}-l_3|_{e_1}, l_2|_{f_1}-l_3|_{f_1}).$$
Therefore we have
\begin{align*}
G^q(W_0) &= \ker \nu \\
&= \{ (l_1, l_2, l_3) \in \left(\widehat{H^q(V_1) \oplus H^q ( V_2)}\right ) \oplus H^q(E_1) \mid l_1|_{e_1}=l_3|_{e_1}, l_2|_{f_1}=l_3|_{f_1}\}\\
&= \{ (l_1, l_2, l_3) \in H^q(V_1) \oplus H^q(V_2) \oplus H^q(E_1) \mid l_1|_{e_1}=l_3|_{e_1}, l_2|_{f_1}=l_3|_{f_1}, l_1|_{\widetilde D} = l_2|_{\widetilde D}\}\\
&= \{ (l_1, l_2, l_3) \in H^q(V_1) \oplus H^q(V_2) \oplus H^q(E_1) \mid l_1, l_2, l_3 \text{ are compatible.}\},
\end{align*}
where `compatible' means that the restrictions of the classes $l_1, l_2, l_3$ to the intersections of $V_1, V_2, E_1$ coincide.
In the general case that $\Sing(Y)$ consists of multiple points ($N \geq 1$), we have
\begin{align*}
G^q(W_0) = \{ (l_1, \cdots, l_{N+2}) \in H^q(V_1) \oplus H^q(V_2) \oplus H^q(E_1) \oplus \cdots \oplus H^q(E_N) \mid \\
l_1, \cdots, l_{N+2} \text{ are compatible.}\}
\end{align*}
With this, one can easily verify that
$$H:=\left( f^*(h)- \frac{1}{2}\sum_i e_i, g^* (f^*(h)- \frac{1}{2}\sum_i f_i ), e_1, \cdots, e_N \right )$$
and
$$H':= \left (m_0, d F, 0, \cdots, 0 \right )$$
belong to $G^2(W_0)$ and $G^{4}(W_0)$ respectively, where
$F \in H^{4}(V_2)$ is a fiber class of the blow up $g:V_2 \ra Y$ over a point in $C$ and $d = m_0 \cdot \widetilde D$.
See the end of \S \ref{sec3} for the definition of $m_0$.
Finally we get the following lemma:
\begin{lem} The lattice $\langle H \rangle$ is isomorphic to $H^2(W_t, \mbz)_f$ with the cup product preserved.
\end{lem}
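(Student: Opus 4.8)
The plan is to apply Theorem \ref{degthm} directly, which requires checking one hypothesis ($h^{2,0}(W_t)=0$) and exhibiting suitable dual classes in $G^{n-2}(W_0)$. Since $W_t$ is a deformation of the Calabi--Yau $3$-fold $X$, we have $h^{2,0}(W_t) = h^{2,0}(X) = 0$, and $h^2(W_t) = h^2(X) = 1$ because $X$ has Picard number one; so $k=1$ in the notation of Theorem \ref{degthm} and it suffices to produce a single class $a_1 \in G^2(W_0)$ and a single class $b_1 \in G^{n-2}(W_0) = G^4(W_0)$ with $a_1 \cdot b_1 = \pm 1$. We take $a_1 = H$ and $b_1 = H'$, which were already shown to lie in $G^2(W_0)$ and $G^4(W_0)$ respectively.

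The main computation is then to evaluate the cup product $H \cdot H'$ using the definition of the pairing on $G^\bullet(W_0)$ from \S\ref{sec2}: the pairing is the sum of the componentwise cup products on each $V_\alpha$ (here $V_1$, $V_2$, and the $E_i$'s), with mixed terms discarded. Since $H' = (m_0, dF, 0, \cdots, 0)$ has vanishing components on all $E_i$, only the $V_1$ and $V_2$ contributions survive. On $V_1$ the contribution is $\bigl(f^*(h) - \frac{1}{2}\sum_i e_i\bigr) \cdot m_0$, which equals $1$ by the defining property of $m_0$ stated at the end of \S\ref{sec3} (the cup product matrix of $f^*(h)-\frac12\sum e_i, e_1,\dots,e_N$ with $m_0,m_1,\dots,m_N$ is the identity). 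On $V_2$ the contribution is $g^*\bigl(f^*(h) - \frac{1}{2}\sum_i f_i\bigr) \cdot dF$; since $F$ is a fiber class of the blow-up $g : V_2 \to Y$ over a point of $C$, and $g^*$ of any divisor meets such a fiber trivially, this term vanishes. Hence $H \cdot H' = 1$, which is unimodular as a $1\times 1$ matrix.

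Applying Theorem \ref{degthm} with $a_1 = H$, $b_1 = H'$ then yields that the sublattice $\langle H \rangle$ of $G^2(W_0)$ is isomorphic to $H^2(W_t,\mbz)_f$ with the cup product preserved, which is exactly the assertion of the lemma. For the general case $N \geq 1$ one argues identically: $H = \bigl(f^*(h)-\frac12\sum e_i,\, g^*(f^*(h)-\frac12\sum f_i),\, e_1,\dots,e_N\bigr)$ and $H' = (m_0, dF, 0,\dots,0)$ are compatible tuples by the same restriction checks, and the pairing $H \cdot H'$ again receives a contribution only from the $V_1$-slot, giving $1$.

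I expect the only genuine subtlety to be bookkeeping: confirming that $H$ and $H'$ are indeed well-defined integral classes (not merely rational combinations) and that they satisfy the compatibility conditions cutting out $G^q(W_0)$ — but this was already dispatched in the discussion preceding the lemma, where $f^*(h)-\frac12\sum_i e_i$ was shown to be integral in Lemma \ref{key} and the explicit description of $G^q(W_0)$ as compatible tuples was derived. The verification that $g^*(\text{divisor}) \cdot F = 0$ is the one place where one uses the specific geometry of the blow-up $g$, namely that $F$ is contracted by $g$, so the projection formula forces the vanishing. Everything else is a direct appeal to Theorem \ref{degthm}.
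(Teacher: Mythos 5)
Your proposal is correct and follows essentially the same route as the paper: exhibit $H$ and $H'$ as dual classes, compute $H \cdot H' = 1 + 0 = 1$ (the $V_1$-slot contributing $1$ via the defining property of $m_0$ and the $V_2$-slot vanishing because $F$ is contracted by $g$), and invoke Theorem \ref{degthm} with $h^2(W_t)=1$ and $h^{2,0}(W_t)=0$. The only difference is that you spell out the projection-formula justification for the vanishing of the $V_2$ term, which the paper leaves implicit.
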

\begin{proof}
Note that
$$H \cdot H'= \left( f^*(h)- \frac{1}{2}\sum_i e_i \right ) \cdot m_0 + g^* (f^*(h)- \frac{1}{2}\sum_i f_i ) \cdot d F + 0+ \cdots +0 = 1+ 0 = 1.$$
Since $h^{2}(W_t) = 1$ and $h^{2,0}(W_t)=0$, we are done by Theorem \ref{degthm}.
\end{proof}
Note that $\Pic(X) \simeq H^2(X, \mbz) \simeq H^2(W_t, \mbz)$ and $\rk \Pic(X) = 1$. Let $\Pic(X)_f = \langle H_X \rangle$, where $H_X$ be an ample generator. Then we have
\begin{align*}
H_X^3 = H^3 &= \left( f^*(h)- \frac{1}{2}\sum_i e_i \right )^3 + g^* (f^*(h)- \frac{1}{2}\sum_i f_i   )^3+ e_1^3+ \cdots+ e_N^3\\
&= \left(h^3- \frac{1}{8}\sum_i e_i^3 \right ) + \left(h^3- \frac{1}{8}\sum_i f_i^3 \right ) + 1+ \cdots+ 1\\
&= \left(h^3- \frac{1}{8}\sum_i 4 \right ) + \left(h^3- \frac{1}{8}\sum_i 4 \right ) + N\\
&= 2 h^3 -N + N \\
&=2 h^3.
\end{align*}
\begin{theorem} \label{cor4.3}The following map:
$$\pi^* : \Cl(Y) \ra \Pic(X)_f$$
is a bijection.
\end{theorem}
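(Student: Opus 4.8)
The plan is to use that $\Cl(Y)$ and $\Pic(X)_f$ are both infinite cyclic and to pin down the image of the generator $h$ by a triple self-intersection computation. First I would make sense of the map itself: since $\pi\colon X\to Y$ is a finite flat morphism (a double cover) of normal varieties, flat pullback of Weil divisors is defined and compatible with rational equivalence, so there is a homomorphism $\pi^*\colon\Cl(Y)\to\Cl(X)=\Pic(X)$ — the last equality because $X$ is smooth — which composed with the projection $\Pic(X)\to\Pic(X)_f$ gives the map of the statement. By Condition~\ref{newcon}(1) we have $\Cl(Y)=\langle h\rangle\cong\mbz$, and by the computation preceding the theorem $\Pic(X)_f=\langle H_X\rangle\cong\mbz$ with $H_X^3=2h^3$. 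Hence there is a unique integer $a$ with $\pi^*(h)\equiv a\,H_X$ modulo torsion, and the whole statement reduces to proving $a=1$, since a homomorphism of infinite cyclic groups carrying a generator to a generator is an isomorphism.

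The key observation is that although $h$ is not Cartier, $2h$ is: at each point of $\Sing(Y)$ the local divisor class group is $\mbz/2$ (this is exactly the content of the equivalences (1)--(4) in the proof of Lemma~\ref{key}, applied with $k=2$), and $Y$ is smooth elsewhere. I then apply the projection formula for the finite morphism $\pi$ of degree $2$ to the Cartier divisor $2h$, getting $(\pi^*(2h))^3=2\cdot(2h)^3=16\,h^3$. On the other hand, since intersection numbers of line bundles on the threefold $X$ depend only on classes modulo torsion and $\pi^*(2h)=2\,\pi^*(h)\equiv 2a\,H_X$, one has $(\pi^*(2h))^3=(2a\,H_X)^3=8a^3H_X^3=16\,a^3h^3$. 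As $h^3>0$ (recorded at the start of \S\ref{sec3}), comparing the two expressions forces $a^3=1$, hence $a=1$, and $\pi^*\colon\Cl(Y)\to\Pic(X)_f$ is therefore a bijection.

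The step requiring the most care is the interplay with $h$ not being Cartier: one must justify the pullback $\pi^*$ on all of $\Cl(Y)$, not merely on $\Pic(Y)$, and one may invoke the projection formula $(\pi^*D)^3=\deg(\pi)\cdot D^3$ only after passing to the Cartier multiple $2h$ (and checking the compatibility of flat Weil-divisor pullback with Cartier-divisor pullback there). Everything else is the elementary arithmetic of rank-one lattices, which is why the identity $H_X^3=2h^3$ coming from the degeneration argument of the previous lemmas is doing the real work. One could equally run the argument with $-2K_Y$, which is very ample by Condition~\ref{newcon}(2), in place of $2h$.
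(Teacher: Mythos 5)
Your proposal is correct and follows essentially the same route as the paper: write $\pi^*(h)=aH_X$, compute $(\pi^*h)^3=2h^3$ by the projection formula for the degree-two cover, compare with the degeneration-derived identity $H_X^3=2h^3$, and conclude $a^3=1$. The only difference is that you pass to the Cartier multiple $2h$ before invoking the projection formula and justify the definition of $\pi^*$ on Weil classes, points the paper elides; this extra care is welcome but does not change the argument.
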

\begin{proof}
For some positive integer $a$, $\pi^*(h) = a H_X$. Note that $a^3 H_X^3 = 2 h^3$ and $H_X^3 = 2h^3$.
So we have $a = 1$ because $H_X^3$ and $h^3$ are positive.  Therefore the map is bijective.
\end{proof}

 Now we attain the goal of this note.

\begin{cor}[Theorem 0.2] \label{lthm}
The following map of lattices
$$\Cl(Y) \ra \Cl(Y_n)_f$$
is injective and primitive.
\end{cor}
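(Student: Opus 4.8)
The plan is to push the ordinary Lefschetz hyperplane theorem from the smooth Calabi--Yau threefold $X$ down to $Y$ along the double covering $\pi\colon X\to Y$, by putting $\Cl(Y)\to\Cl(Y_n)_f$ into a commutative square with $\Pic(X)_f\to\Pic(X_n)_f$. Set $X_n:=\pi^{-1}(Y_n)$. First I would check that $X_n$ is a smooth (hence irreducible, since it is connected by Lefschetz) surface: away from $\Sing(Y)$ it is a double cover of a smooth threefold branched along the smooth $S$, cut out by a transverse/general $Y_n$, so it stays smooth; near a point $p\in\Sing(Y)$ the section $S$ does not vanish, so in suitable local coordinates $\pi$ is the quotient map $\mathbb{C}^3\to\mathbb{C}^3/\{\pm1\}$, and the preimage of the quasi-smooth $Y_n$ — which near $p$ is a hyperplane through the origin modulo $\{\pm1\}$ when $n$ is odd — is an honest hyperplane in $\mathbb{C}^3$, hence smooth. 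Moreover $X_n\sim\pi^*(Y_n)\sim nH_X$ is an ample Cartier divisor on $X$, and $Y_n$ is normal (its worst singularities are the $A_1$-points $\tfrac12(1,1)$), so $\pi_n:=\pi|_{X_n}\colon X_n\to Y_n$ is a finite degree-two morphism of normal surfaces, and $\Pic(X_n)=\Cl(X_n)$.

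Next I would assemble two ingredients. (i) \emph{Lefschetz on $X$.} Since $X_n$ is a smooth ample divisor in the smooth projective threefold $X$, the pair $(X,X_n)$ is $2$-connected, so $H_k(X,X_n;\mathbb{Z})=0$ for $k\le 2$; by universal coefficients $H^2(X,X_n;\mathbb{Z})=0$ and $H^3(X,X_n;\mathbb{Z})$ is free, so the long exact cohomology sequence gives that $H^2(X,\mathbb{Z})\to H^2(X_n,\mathbb{Z})$ is injective with torsion-free cokernel. Since $H^i(X,\mathcal{O}_X)=0$ for $i=1,2$ on a Calabi--Yau threefold, $\Pic(X)=H^2(X,\mathbb{Z})$, and a short diagram chase with torsion subgroups then shows that $r\colon\Pic(X)_f\to\Pic(X_n)_f$ is injective with torsion-free cokernel, i.e.\ $r(\Pic(X)_f)$ is a saturated sublattice of $\Pic(X_n)_f$. (ii) \emph{Projection formula for $\pi_n$.} For the finite degree-two morphism of normal surfaces $\pi_n$ one has $(\pi_n)_*\circ\pi_n^*=2\cdot\mathrm{id}$ on $\Cl(Y_n)$, so the kernel of $\pi_n^*\colon\Cl(Y_n)\to\Cl(X_n)=\Pic(X_n)$ is killed by $2$; hence $\pi_n^*\colon\Cl(Y_n)_f\to\Pic(X_n)_f$ is injective. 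Finally, restriction and pullback commute on the dense opens where everything is locally free, so the square with vertical maps $\pi^*,\pi_n^*$ and horizontal restriction maps $\Cl(Y)\to\Cl(Y_n)_f$, $\Pic(X)_f\to\Pic(X_n)_f$ commutes.

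Now the proof proper: by Condition \ref{newcon}(1), $\Cl(Y)=\mathbb{Z}h$ with $h^3>0$, so the assertion is precisely that $h|_{Y_n}$ is a primitive nonzero vector of $\Cl(Y_n)_f$. Applying $\pi_n^*$ and using commutativity together with $\pi^*(h)=H_X$ from Theorem \ref{cor4.3}, we get $\pi_n^*(h|_{Y_n})=(\pi^*h)|_{X_n}=r(H_X)$, which is ample on $X_n$, hence nonzero; by injectivity of $\pi_n^*$ on $\Cl(Y_n)_f$ this forces $h|_{Y_n}\neq 0$, so $\Cl(Y)\to\Cl(Y_n)_f$ is injective. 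For primitivity, suppose $h|_{Y_n}=m\ell$ in $\Cl(Y_n)_f$ with $\ell\in\Cl(Y_n)_f$ and $m\ge 1$; then $r(H_X)=\pi_n^*(h|_{Y_n})=m\,\pi_n^*(\ell)$. But $H_X$ generates $\Pic(X)_f$ and $r(\Pic(X)_f)$ is saturated in $\Pic(X_n)_f$ by ingredient (i), so $r(H_X)$ is a primitive vector of $\Pic(X_n)_f$; hence $m=1$, and the cokernel of $\Cl(Y)\to\Cl(Y_n)_f$ is torsion-free.

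I expect the main obstacle to be ingredient (i) in its \emph{sharp} form — torsion-freeness of $\coker\bigl(H^2(X,\mathbb{Z})\to H^2(X_n,\mathbb{Z})\bigr)$, not just injectivity, and the passage from this to the statement about $\Pic(\cdot)_f$. A secondary delicate point is the smoothness (hence normality, which is what makes the projection formula available) of $X_n$ along $\pi^{-1}(\Sing Y)$ when $n$ is odd, and, relatedly, making the comparison square genuinely commute despite $h$ being non-Cartier at $\Sing(Y)$.
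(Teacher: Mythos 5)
Your proposal is correct and follows essentially the same route as the paper: both reduce the statement to the ordinary Lefschetz hyperplane theorem for the ample divisor $X_n=\pi^{-1}(Y_n)$ in the smooth double cover $X$, combined with the bijectivity of $\pi^*:\Cl(Y)\to\Pic(X)_f$ (Theorem \ref{cor4.3}), via the same commutative square. The extra checks you supply (smoothness of $X_n$ over $\Sing(Y)$, injectivity of $\pi_n^*$ via the projection formula) are not strictly needed for the paper's diagram chase but are harmless and make the argument more self-contained.
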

\begin{proof}
Note that $X_n:=\pi^{-1}{(Y_n)}$ is an ample divisor of $X$.
Then the induced map $\pi|_{X_n}:X_n \ra Y_n$ is a double covering, branched over $Y_n \cap S$ if  $n$ is even and  $(Y_n \cap S) \cup \Sing(X)$ if $n$ is odd.
Let $i_Y:Y_n \hookrightarrow Y$ and
$i_X: X_n \hookrightarrow X$ be the inclusion maps. By the Lefschetz hyperplane section theorem, the map of lattices
$$i_X^*:\Pic(X)_f \ra \Pic(X_n)_f$$
is injective and primitive.
Consider the following commutative diagram:
$$
\xymatrix{
\Pic(X_n)_f & \Pic(X)_f \ar[l]^{i_X^*}\\
\Cl(Y_n)_f \ar[u]^{\varsigma} &\Cl(Y) \ar[l]^{i_Y^*} \ar[u]_{\pi^*}
}$$
where $\varsigma=(\pi|_{X_n})^*: \Cl(Y_n)_f \ra \Pic(X_n)_f$.
We showed that the map $\pi^* : \Cl(Y) \ra \Pic(X)_f$ is injective and primitive. So $i_X^* \circ \pi^*$ is also injective and primitive. Therefore the map $i_Y^*:\Cl(Y) \ra \Cl(Y_n)_f$ should be injective and primitive.
\end{proof}
The arguments can be generalized to the case of odd dimensional \Q-Fano $n$-folds  with singularities of type $\frac{1}{2}(1,\cdots, 1)$ and other similar conditions.

Now we can remove the condition of the exceptional case $(-K_Y^3, N) = (4,4)$ in Theorem 3.2 of \cite{Le1}. The critical part of the proof of Theorem 3.2 of \cite{Le1} is to show that $a=1$ as in the above proof.  In \cite{Le1}, we relied on the Riemann--Roch theorem and   divisibility of some intersection numbers, which is why  the numbers $(-K_Y^3, N) $ mattered there.
Now we can get the invariants of the Calabi--Yau double cover of Takagi's \Q-Fano $3$-fold of $(-K_Y^3, N) = (4,4)$. The invariants are
$$(H_X^3, c_2 (X) \cdot H_X, {{e}} (X) ) = (8, 44, -88),$$
where $e(X)$ and $c_2(X)$ are the topological Euler characteristic number and the second Chern class of $X$ respectively.
Those invariants verify that it is a new example of Calabi--Yau $3$-folds with Picard number one, which  play an important
role in the moduli spaces of all Calabi--Yau $3$-folds (\cite{Gr}). See Table 1 in \cite{EnSt} for a list of some known ones.

The author would like to express his sincere thanks to the anonymous referee. The referee read the first version very carefully, pointed out many mistakes and made several valuable suggestions.
This work was supported by National Research Foundation of Korea Grant funded by the Korean
Government (2010-0015242) and  Hongik University Research Fund.


\begin{thebibliography}{[CaOsK]}
\bibitem[CaFl]{CaFl} F. Campana and H. Flenner,
\textit{Projective threefolds containing a smooth rational surface
with ample normal bundle},
J. reine angew. Math {\bf 440} (1993), 77--98.

\bibitem[EnSt] {EnSt} C. van Enckevort and D. van Straten,
\textit{Monodromy calculations of fourth order equations of Calabi--Yau type, Mirror symmetry. V},
539--559, AMS/IP Stud. Adv. Math., {\bf 38}, Amer. Math. Soc., Providence, RI, 2006.


\bibitem[Gr]{Gr} M. Gross,
\textit{Primitive Calabi--Yau threefolds},
 J. Diff. Geom. {\bf 45} (1997), 288--318.



\bibitem[Le1]{Le1} N.-H. Lee,
\textit{Calabi--Yau coverings over some singular varieties and new Calabi--Yau $3$-folds with Picard number one}, Manuscripta Math. {\bf125} (2008), no. 4, 531--547.
\bibitem[Le2]{Le2} N.-H. Lee,
\textit{Calabi--Yau construction by smoothing normal crossing varieties},
Int. J. of Math. {\bf 21} (2010), no. 6, 701--725.

\bibitem[Sa1]{Sa1} T. Sano,
\textit{Classification of non-Gorenstein \Q-Fano $d$-folds of Fano index greater than $d-2$},
 Nagoya Math. J. {\bf 142} (1996), 133-143.

\bibitem[Sa2]{Sa2}T. Sano,
\textit{On classification of non-Gorenstein $Q$-Fano $3$-folds of Fano index $1$},
J. Math. Soc. Japan {\bf 47} (1995), no. 2, 369--380.

\bibitem[Ta1]{Ta1} H. Takagi,
\textit{On classification of $\Bbb Q$-Fano $3$-folds of Gorenstein index 2. I},
Nagoya Math. J.
{\bf 167} (2002), 117-155.
\bibitem[Ta2]{Ta2} H. Takagi,
\textit{On classification of $\Bbb Q$-Fano $3$-folds of Gorenstein index 2. II},
Nagoya Math. J.
{\bf 167} (2002), 157-216.
\end{thebibliography}
\end{document}